\def\red{\color{red}}
\def\blue{\color{blue}}
\def\nn{\nonumber}
\def\a{\alpha} \def\b{\beta} \def\d{\delta} 
\def\e{\varepsilon}    \def\g{\gamma}
\def\G{\Gamma}  \def\k{\kappa}
     \def\l{\lambda}
 \def\m{\mu} \def\n{\nu} 
  \def\s{\sigma} 
\def\t{\tau}
\def\TO{\Tilde{O}}
\newcommand{\mc}[1]{\mathcal{#1}}
\def\by{{\bf y}}
\def\bx{{\bf x}}
\newtheorem{theorem}{Theorem}
\newtheorem{lemma}[theorem]{Lemma}
\newtheorem{claim}{Claim}
\newcommand{\wh}[1]{\widehat{#1}}
\newcommand{\brac}[1]{\left(#1\right)}
\newcommand{\bfrac}[2]{\left(\frac{#1}{#2}\right)}
\def\half{\tfrac{1}{2}}
\newcommand{\rai}{\rightarrow \infty}
\newcommand{\ra}{\rightarrow}
\newcommand{\set}[1]{\left\{#1\right\}}
\def\es{\emptyset}
\def\E{\mathbb{E}}
\def\Pr{\mathbb{P}}
\newcommand{\ignore}[1]{}
\def\cG{{\mathcal G}}
\def\cX{{\mathcal X}}
\newcommand{\beq}[2]{\begin{equation}\label{#1}#2\end{equation}}
\def\nn{\nonumber}
\def\cG{\mathcal{G}}
\def\bd{{\bf d}}
\newcounter{rot}%\addtocounter{rot}{1}, \therot
\begin{document}
\author{Patrick Bennett\thanks{Research supported in part by Simons Foundation Grant \#426894.}\\Departement of Mathematics\\ Western Michigan University\\Kalamazoo MI 49008-5248  \and Colin Cooper\\Department of Informatics\\ King's College London\\London WC2B 4BG \and Alan Frieze\thanks{Research supported in part by NSF grant DMS1952285}\\Department of Mathematical Sciences\\Carnegie Mellon University\\Pittsburgh PA 15213}

\title{Rainbow Greedy Matching Algorithms}
\maketitle

\begin{abstract}
    We consider the problem of finding a large rainbow matching in a random graph with randomly colored edges. In particular we analyze the performance of two greedy algorithms for this problem. The algorithms we study are colored versions of algorithms that were previously used to find large matchings in random graphs (i.e. the color-free version of our present problem).
\end{abstract}

\section{Introduction}
In this short note, we discuss greedy algorithms for finding rainbow matchings in sparse random graphs. Thus we start with the random graph $G_{n,m},m=cn/2$ where $c>0$ is a constant and then color each edge uniformly at random (u.a.r.) from a set of colors $Q=[q]$. A set $S$ of edges is said to be {\em rainbow colored} if every edge in $S$ has a different color. The decision problem for whether a colored graph has a rainbow matching of size $k$ is NP-complete \cite{LP}. Here we discuss the efficacy of simple greedy algorithms for finding large rainbow matchings.

The color-free version of this problem has been studied in several previous papers. Dyer, Frieze and Pittel \cite{1} studied two greedy algorithms for finding large matchings. The first algorithm (Greedy Matching) repeatedly chooses an edge $\set{x,y}$ u.a.r., adds it to the current matching and deletes the vertices $x,y$. This continues until the remaining graph has no edges. They showed that with high probability (i.e. with probability tending to 1 as $n$ grows, henceforth abbreviated w.h.p.) this algorithm produces a matching of size asymptotic to $\tfrac n2\brac{1-\frac{1}{c+1}}$. They also considered a variation (Modified Greedy Matching) where the algorithm first chooses a vertex uniformly at random, and then chooses a uniform random incident edge and then updates the matching. This algorithm does slightly better than the first, it produces a matching of size asymptotic to $\tfrac n2\brac{1-\frac{\log(2-e^{-c})}{2c}}$.  Further improvements were
obtained by Karp and Sipser  \cite{KS}, using KSGreedy, a modification of Greedy Matching. KSGreedy chooses a random vertex of degree one, if there is one, and adds its incident edge to the matching; otherwise it chooses a random edge and adds it to the matching. Karp and Sipser studied this algorithm in $G_{n,p},p=c/n$ and showed that it produced a matching of asymptotically maximal size. The Karp-Sipser algorithm  was further studied for $G_{n,c/n}$ by Aronson, Frieze and Pittel \cite{2}, who showed that the algorithm found a matching within $O(n^{1/5}\log^{O(1)}n)$ of the maximum.

We will prove the following theorems which we prefix with a formal statement of the algorithms.:
\paragraph{Greedy Algorithm.}
Formally the algorithm proceeds as follows:
\vspace{-.15in}
\begin{tabbing}
xxxxx\= xxx\= xxx\= xxxx\= \kill
{\bf GREEDY} \+ \\[1.5ex]
{\bf begin} \+ \\
$M \leftarrow \emptyset $; \\
{\bf while} $E(G)\neq \emptyset$ {\bf do} \\
{\bf begin} \+ \\
\>  Choose $e=\{ u,v\} \in E$  u.a.r.; $M \leftarrow M\cup \{ e\}$;  \\
\> $V \leftarrow V \setminus \{ u,v\}$ ; $G \leftarrow G[V]$\\
\> $F\leftarrow \set{f \in E:c(f)=c(e)}$ ; $E\leftarrow E\setminus F$;\-\\
{\bf end}; \- \\
Output $M$ \\
{\bf end} \-
\end{tabbing}
\begin{theorem}\label{th1}
Suppose that $q=\k n$.
Let $\m$ denote the size of the matching produced by GREEDY, then following hold w.h.p.
\begin{enumerate}[(a)]
\item If $\k=1/2$ then $\m\sim \tfrac12\brac{1-\frac{1}{(2c+1)^{1/2}}}n$.
\item If $\k=\tfrac12(1+\e)$, where $|\e|>0$ then  $\m\sim \tfrac12\brac{1-\frac{1+O(\e)}{(2c+1)^{1/2}}}n$.
\item If $\k<1/2c$ and $c>5$ then $\k(1-e^{-c(1/2\k-2)})n< \m<  \k(1-e^{-c/2\k})n$.
\item If $\k\gg1$ then $\m\sim \tfrac12\brac{1-\frac{1}{2c+1+\e_\k}}n$ where $\e_k=\frac{\log(c+1)}{2\k-1}-\frac{c}{2\k}+O\bfrac{c}{\k^2}$.
\end{enumerate}
\end{theorem}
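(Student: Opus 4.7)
The plan is to apply the Wormald differential equation method to the state vector $(V_s,E_s,Q_s)$, where $V_s=|V(G_s)|$, $E_s=|E(G_s)|$, and $Q_s=q-s$ is the number of still-available colors after step $s$. A first step is to establish the invariance that, conditional on $(V_s,E_s,Q_s)$, the current colored graph $G_s$ is distributed as a uniform graph with $V_s$ vertices and $E_s$ edges, each surviving edge independently assigned a uniform color from the $Q_s$ unused colors. This follows because the random edge choice, vertex deletion, and color filtering are all equivariant under the symmetric groups on vertices and on available colors.

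Under this invariance the one-step drifts are $\E[\Delta V_s\mid\cF_s]=-2$, $\E[\Delta Q_s\mid\cF_s]=-1$, and
\[
\E[\Delta E_s\mid\cF_s]=-\frac{4E_s}{V_s}-1-\frac{E_s}{Q_s}+o(1),
\]
where the $-1$ accounts for the matched edge itself, the $-4E_s/V_s$ counts the expected other edges incident to $u,v$ (each endpoint having conditional size-biased degree $\approx 1+2E_s/V_s$), and the $-E_s/Q_s$ counts the expected other edges of color $c(e)$ surviving the vertex deletion. Wormald's theorem then gives, with $\t=s/n$, that whp $V_s=(1-2\t)n+o(n)$, $Q_s=(\k-\t)n+o(n)$, and $E_s/n$ tracks the solution $y(\t)$ of the limiting ODE. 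Changing to $\a=1-2\t$ and writing $\b=\k-\tfrac12$, this becomes the first-order linear equation
\[
\frac{dy}{d\a}=\frac{2y}{\a}+\frac12+\frac{y}{\a+2\b},\qquad y(1)=c/2,
\]
solved exactly via the integrating factor $\a^{-2}(\a+2\b)^{-1}$; the stopping point $\a^*$ is the positive root of $y(\a)=0$, and $\m\sim(1-\a^*)n/2$.

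The four parts then follow by specialising. For (a), $\b=0$ collapses the integrating factor to $\a^{-3}$ and a direct computation yields $\a^*=(2c+1)^{-1/2}$. For (b), with $\b=\e/2$, the $1/(\a+2\b)$ term induces an $O(\e)$ perturbation of the solution and hence of $\a^*$. For (d), with $\b\gg1$, the color term is a small correction to the classical greedy-matching ODE; expanding $\log((\a+2\b)/(2\k\a))$ and $2\b(1-1/\a)$ in powers of $1/\k$ around the classical root $1/(c+1)$ produces the stated $\e_\k$. For (c), with $\k<1/(2c)$ one has $\k<1/4$ and hence $4Q_s<V_s$ throughout, so the $E/Q$ term dominates in the drift: the upper bound $\m\le\k n(1-e^{-c/(2\k)})(1+o(1))$ is the elementary count $\m\le\#\{\text{colors with at least one edge}\}\sim q(1-(1-1/q)^m)$, and the lower bound follows from the differential inequality $dE/ds\ge-\brac{1+\tfrac{4\k}{1-2\k}}E/Q-1$ obtained from $V_s\ge(1-2\k)n$, which when integrated gives the exponential with exponent $-c(1/(2\k)-2)$.

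The main obstacle I anticipate is the asymptotic analysis for (d): extracting the precise correction $\e_\k=\log(c+1)/(2\k-1)-c/(2\k)+O(c/\k^2)$ from the transcendental equation $y(\a^*)=0$ requires careful expansion of the integrating-factor formula in powers of $1/\k$ at a root that itself shifts with $\k$. A secondary technical point is the standard near-stopping-time control: the Wormald estimate degrades as $y\downarrow0$, so the final $o(n)$ steps until $E_s=0$ must be handled separately by a short tail argument.
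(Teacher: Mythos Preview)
Your overall framework is exactly the paper's: track $(\nu,\mu)$ through the differential equation method, derive the linear ODE
\[
\frac{dM}{d\tau}=-1-\frac{4M}{1-2\tau}-\frac{M}{\kappa-\tau},
\]
solve by the integrating factor $(1-2\tau)^{-2}(\kappa-\tau)^{-1}$, and then analyse the root $\tau_0$ case by case. Two points deserve attention.

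\textbf{Boundedness.} You invoke Wormald's theorem directly, but the one--step change $|\mu(t+1)-\mu(t)|$ can be $\Theta(n)$ (a chosen endpoint may have linear degree). The paper handles this by using Warnke's refinement, which allows a large absolute bound $\theta=O(n)$ provided the probability of a jump exceeding $\phi=n^{0.1}$ is at most $\gamma=e^{-\Omega(n^{0.1})}$; they verify this by a direct count. You should either cite that version or insert the standard truncation argument; without it the Trend/Boundedness hypotheses are not met as stated.

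\textbf{Part (c), lower bound.} Your differential inequality
\[
\frac{dE}{ds}\ge -\Bigl(1+\tfrac{4\kappa}{1-2\kappa}\Bigr)\frac{E}{Q}-1
\]
is valid (using $V_s\ge(1-2\kappa)n$ and $Q_s\le\kappa n$), but it does \emph{not} integrate to the claimed exponent $-c(1/(2\kappa)-2)$. Solving with integrating factor $(\kappa-\tau)^{-\gamma}$, $\gamma=1+4\kappa/(1-2\kappa)$, one finds the lower solution vanishes at
\[
\tau=\kappa\bigl(1-r\bigr),\qquad r^{\gamma-1}=\frac{1}{1+2c/(1-2\kappa)},
\]
hence $\ln r=-\tfrac{1-2\kappa}{4\kappa}\ln\!\bigl(1+\tfrac{2c}{1-2\kappa}\bigr)$, which for small $\kappa$ behaves like $-\ln(1+2c)/(4\kappa)$, not $-c/(2\kappa)+2c$. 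The paper instead substitutes $\tau=\kappa(1-e^{-c/(2\kappa)+\varepsilon})$ directly into the explicit solution
\[
f_\kappa(\tau)=\frac{c(2\kappa-1)^2}{2\kappa}+\log\frac{\kappa-\tau}{\kappa(1-2\tau)}-\frac{2(2\kappa-1)\tau}{1-2\tau}
\]
and checks that $f_\kappa<0$ at $\varepsilon=0$ and $f_\kappa>0$ at $\varepsilon=2c$, pinning $\tau_0$ between the two values. Your upper bound via ``number of colours that appear'' is a pleasant elementary alternative to the paper's ODE-based upper bound, though it yields only $\mu\le\kappa n(1-e^{-c/(2\kappa)})(1+o(1))$ rather than the strict inequality on $\tau_0$.

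For (b) and (d) your plan matches the paper's in spirit; the paper carries it out via the substitutions $h=(2\kappa-1)/(1-2\tau)$ and $z=(\kappa-\tau)/(\kappa(1-2\tau))$ respectively, which reduce $f_\kappa=0$ to equations amenable to Taylor expansion.
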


\paragraph{Modified greedy algorithm.} The modified greedy algorithm is formally described as
\begin{tabbing}
xxxxx\= xxx\= xxx\= xxxx\= \kill
{\bf MODIFIED GREEDY} \+ \\[1.5ex]
{\bf begin} \\
$M \leftarrow \emptyset $; \\
{\bf while} $E(G)\neq \emptyset$ {\bf do} \+ \\
{\bf begin} \+ \\
Choose $v\in V$  u.a.r.;\\
\>If $N(v) = \es$, $V \leftarrow V \setminus \{v\}$ ; \\
\>Else if $N(v) \ne \es$  choose  $u\in N(v)$  u.a.r.;\\
\>\qquad Let  $e=\{ u,v\}$; $M \leftarrow M\cup \{ e\}$;   $V \leftarrow V \setminus \{ u,v\}$ ; \\
\>\qquad $F\leftarrow \set{f \in E:c(f)=c(e)}$ ; $E\leftarrow E\setminus F$;\\
\>$G \leftarrow G[V]$;\-\\
{\bf end}; \- \\
Output $M$ \\
{\bf end}
\end{tabbing}
\begin{theorem}\label{th2}
Suppose that $q=\k n$.  Let $\m$ be the size of the matching produced by MODIFIED GREEDY.
Then the following hold w.h.p.
\begin{enumerate}[(a)]
\item  Let $N(\t)$ be the solution of
\[
N(\t)=1-2\t+ \int_0^\t  \exp\left\{ - \frac c\k N(\s)(N(\s)+\s+\k-1)\right\} \;d\s,
\]
and let $\t_0 \in [0,1]$ be the solution to $N(\t)=0$. Then
$\mu \sim (1-\t_0)n$.
\item  If $\k \ge 1/2$ then
\[
\mu \le \frac{c-1+e^{-c}}{2c-1+e^{-c}}\;n.
\]
\end{enumerate}
\end{theorem}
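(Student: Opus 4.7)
The plan is to apply Wormald's differential equation method. Let $V(t), \mu(t), \iota(t)$ denote the number of remaining vertices, the current matching size, and the number of isolated-vertex deletions encountered after $t$ iterations of the main loop. With $\sigma = t/n$ and $N = V/n$, the conservation laws $V = n - 2\mu - \iota$ and $t = \mu + \iota$ yield $\mu/n = 1 - \sigma - N$, and the number of remaining colors is $Q = \kappa n - \mu$, so $Q/n = N + \sigma + \kappa - 1$.

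The main structural step is to show, via the principle of deferred decisions, that throughout the algorithm the remaining colored edges on remaining vertices using remaining colors are distributed as independent Bernoullis with probability $\approx c/(\kappa n^2)$, one per triple (remaining pair of vertices, remaining color). Under this model, a uniformly random remaining vertex has degree approximately Poisson with mean $d(\sigma) = (c/\kappa) N(N + \sigma + \kappa - 1)$, so the probability it is isolated is $p_0 \approx e^{-d(\sigma)}$. Combined with $\mathbb{E}[\Delta V] = -(2 - p_0)$ per iteration, $N$ satisfies the ODE $N'(\sigma) = -2 + e^{-d(\sigma)}$, which integrates to the displayed formula for $N(\tau)$. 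Wormald's theorem then gives uniform concentration w.h.p., so the algorithm terminates at $\sigma \sim \tau_0$ and from $\mu/n = 1 - \sigma - N$ one reads off $\mu \sim (1 - \tau_0)n$ w.h.p., yielding (a).

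For part (b), the cleanest route is to compare the rainbow process with the ``color-free limit'' obtained by sending $\kappa \to \infty$. Since smaller $\kappa$ means more edges destroyed per matching event, a coupling argument (using the same vertex-selection coins and comparing the resulting ODEs, wherein $d^{(\kappa)}(\sigma) \le d^{(\infty)}(\sigma)$ because the color-removal term only decreases edge density) shows that $\mu^{(\kappa)} \le \mu^{(\infty)}$; the hypothesis $\kappa \ge 1/2$ is used to ensure $M = N+\sigma+\kappa-1 \ge 0$ throughout, so that the comparison has the correct sign. In the $\kappa \to \infty$ limit one has $d(\sigma) = cN(\sigma)$, and $N' = -2 + e^{-cN}$ integrates explicitly to $\tau_0^{\infty} = \tfrac12 + \tfrac{\log(2-e^{-c})}{2c}$. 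The conclusion of (b) then follows from the analytic inequality $\tau_0^{\infty} \ge c/(2c-1+e^{-c})$, equivalently $(2c-1+e^{-c})\log(2-e^{-c}) \ge c(1-e^{-c})$, which is verified by elementary calculus (the difference of the two sides is nonnegative and vanishes to leading order at $c=0$, with a positive next-order coefficient).

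The main obstacle is the first step: verifying that the ``uniform random colored subgraph'' description persists throughout the execution, since each iteration conditions on the identity of the picked vertex, on its colored neighborhood (revealing which colors have edges to which other vertices), and on the removal of an entire color class. The comparison in part (b) also needs care, both in setting up a rigorous coupling to ensure the dominance claim and in verifying the analytic inequality at the end; a naive approach using only the pointwise bound $d \le c$ (hence $p_0 \ge e^{-c}$) gives only the strictly weaker estimate $\mu \le (1-e^{-c})n/(2-e^{-c})$, so one really must use the explicit solution at $\kappa = \infty$ coupled with the comparison step.
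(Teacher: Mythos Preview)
For part~(a), your outline matches the paper's in overall shape (one-step expectations plus the differential equation method), but the way you obtain the Poisson parameter differs. You want to maintain that the remaining colored edges are independent Bernoullis on (pair, remaining color) triples with probability $\approx c/(\kappa n^{2})$; this is literally true in $G_{n,p}$ with independent uniform colors and does persist under the algorithm's deletions there, but it is not the model of the paper ($G_{n,m}$ with a random coloring), and you rightly flag the transfer as the main obstacle. The paper sidesteps this by tracking the edge count $\mu(t)$ as a \emph{second} random variable alongside $\nu(t)$, using only that conditional on $(\mu(t),\nu(t))$ the remaining edge set is uniform. The resulting two-variable ODE system is solved explicitly to give $M=\tfrac{c}{2\kappa}N^{2}(N+\tau+\kappa-1)$, whence the average degree $\lambda=2M/N=\tfrac{c}{\kappa}N(N+\tau+\kappa-1)$ --- exactly the expression you postulate --- is a consequence rather than an assumption. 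So the paper derives what you assume, at the cost of carrying one extra tracked variable (and applying Warnke's theorem to handle rare large-degree steps).

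For part~(b) the approaches are genuinely different. The paper never compares different $\kappa$: for each fixed $\kappa\ge\tfrac12$ it proves that $F(\tau)=N(N+\tau+\kappa-1)$ is \emph{convex} on $[0,\tau_{0}]$ (this is where $\kappa\ge\tfrac12$ enters), bounds $F$ above by its chord $\kappa(1-\tau/\tau_{0})$, hence $e^{-(c/\kappa)F}\ge e^{-c(1-\tau/\tau_{0})}$, and a single integration at $\tau=\tau_{0}$ gives $\tau_{0}\ge c/(2c-1+e^{-c})$ directly. Your route --- an ODE comparison with the $\kappa=\infty$ solution, followed by the analytic inequality $\tfrac12+\tfrac{\log(2-e^{-c})}{2c}\ge \tfrac{c}{2c-1+e^{-c}}$ --- can be made to work: the right-hand-side comparison $(c/\kappa)N(N+\tau+\kappa-1)\le cN$ holds whenever $N+\tau\le 1$, which is always true along the trajectory since $(N+\tau)'=e^{-d}-1\le 0$ (incidentally $N+\tau+\kappa-1\ge 0$ holds for every $\kappa>0$, so that is not where $\kappa\ge\tfrac12$ is needed in your argument). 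However, you have not actually proved the final inequality: noting that the two sides agree to first order at $c=0$ with a positive next-order coefficient is evidence, not a proof across all $c>0$. The paper's convexity argument is shorter, self-contained, and leaves no such residual to check.
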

Numerical calculations suggest that the matching obtained by  MODIFIED GREEDY is significantly larger than by GREEDY. In the following we have fixed $\k=1/2$ so that we can use Theorem \ref{th1}(a).
\begin{center}
\begin{tabular}{|c|c|c|}\hline
 $c$&GREEDY&MODIFIED GREEDY\\ \hline
0.5&0.092&0.148\\ \hline
1.0&0.146&0.216\\ \hline
1.5&0.184&0.257\\ \hline
2.0&0.211&0.285\\ \hline
2.5&0.233&0.316\\ \hline
3.0&0.250&0.322\\ \hline
3.5&0.264&0.334\\ \hline
4.0&0.276&0.345\\ \hline
4.5&0.287&0.355\\ \hline
5.0&0.296&0.361\\ \hline
\end{tabular}
\end{center}
{\bf Conjecture:} Given the above table we conjecture that if $\m_G$ and $\m_{MG}$ are the sizes of the matchings produced by GREEDY and MODIFDIED GREEDY respectively, then w.h.p. $\m_G\leq\m_{MG}$.

\section{GREEDY}
Let $G(t)$ denote the unmatched graph remaining after $t$ iterations, let $\nu(t)$ be the number of vertices, and  let $\m(t)$ denote the number of edges in $G(t)$.
At each step $t$, we choose a random edge { $\{x,y\}$}, add it to the matching $M(t)$, delete the vertices $x$ and $y$ from $V(t)$, and delete all edges of the same color as {$\{x,y\}$}.  Let $d_t(\cdot)$ denote degree in $G_t$. For the sake of our analysis we reveal the random graph and colors as we run the algorithm. More specifically, at each step $t$ we reveal our matching edge $e_t$ by choosing a random pair of distinct vertices.  Then for each of the other $\m(t)-1$ other edges $e'$ we reveal whether or not $e'$ shares an endpoint with $e$. Any $e'$ meeting $e$ is deleted. Conditional on the matching edge $e$ and the (say) $k$ deleted edges, the remaining edges comprise a uniform random set of $\m(t) -1 - k$ edges on the remaining set of $\nu(t)-2$ vertices.

A priori we do not know the degrees of any of the vertices. We just know that at step $t$ we have $\nu(t)$ vertices and a uniform random set of $\mu(t)$ edges. We reveal the location of one of these edges, which is equally likely to have any two distinct endpoints among the $\nu(t)$ vertices. We know there is an edge there just because we said we were revealing the location of an edge. Of course, after we reveal the location of that edge, we know that its two endpoints must have degree at least 1. But we only know that because we revealed the edge.

We reveal the color of $e_t$ by choosing a random color from among the unused colors. Finally we reveal any other edges of that same color and delete them. Thus we have

\begin{equation}\label{eqn:trend1}
   \E[\m(t+1)  \mid \m(t) ]=\bigg(\m(t)- \E(d_t(x)+d_t(y) -1 \mid \m(t))\bigg)\left(1-\frac{1}{q-t}\right).
\end{equation}

Note that  the number of vertices at step $t$ is $\nu(t)=n-2t$. We will assume (justified later) that
\[
\frac tn \le \min\left\{ \frac 12, \k \right\} - \Omega(1)
\]
so that $\nu(t), q-t \ge \Omega(n)$. Then
\begin{align*}
\E(d_t(x)\mid \m(t)) = \E(d_t(y)\mid \m(t))&= 1+(\n(t)-2) \cdot \frac{\m(t)-1}{\binom{\n(t)}{2}-1}\\
& = 1 + \frac{2(\m(t)-1)}{\n(t)+1}\\
& = 1 + \frac{2\m(t)}{\n(t)} + O\left( n^{-1}\right).
\end{align*}

So, picking up from \eqref{eqn:trend1} we have

\begin{align}
     \E[\m(t+1)\mid \m(t) ]&= \bigg(\m(t)- 1- \frac{4\m(t)}{\n(t)} + O\left( n^{-1}\right)\bigg)\left(1-\frac{1}{q-t}\right)\nonumber\\
     & = \m(t) - 1- \frac{4\m(t)}{\n(t)} - \frac{\m(t)}{q-t} + O\left( n^{-1}\right) \label{echange}
\end{align}
This leads us to consider the differential equation ($t=\t n,M(\t)=\m(t)/n$ here) which will simulate the process w.h.p.
\beq{dmdt}{
\frac{dM}{d\t}=-1-\frac{4M(\t)}{1-2\t}-\frac{M(\t)}{\k-\t}, \quad M(0)=\frac c2 \text{ where }\k=\frac{q}{n}.
}
We let $\t_0$ be the smallest positive root of $M(\t)=0$ where $M$ is the solution to the above initial value problem. We will show that w.h.p. the process ends with a rainbow matching with $\t_0 n + o(n)$ edges. Unsurprisingly, we will not come close to a perfect matching, nor will we come close to using every color. In particular we will see in Section \ref{sec:asymptotics} that for fixed $c, k$ we have
\[
\t_0 \le \min\left\{ \frac 12, \k\right\} - \Omega(1).
\]
To do this we will apply the following theorem of Warnke \cite{lutz}.

\begin{theorem}[Warnke Theorem 2 and Lemma 11]\label{thm:Lutz} Let $a, n>1$ be integers. Let $\mc{D} \subseteq \mathbb{R}^{a+1}$ be a connected and bounded open set. Let $(F_k)_{1 \le k \le a}$ be functions with $F_k:\mc{D} \rightarrow \mathbb{R}$. Let  $\mc{F}_0 \subseteq \mc{F}_1 \subseteq \ldots$ be $\sigma$-fields. Suppose that the random variables $((Y_k(t))_{1 \le k \le a}$ are nonnegative and $\mc{F}_t$-measurable for $t>0$. Furthermore, assume that, for all $t>0$ and $1 \le k \le a$, the following conditions hold whenever $(t/n, Y_1(t)/n, ..., Y_a(t)/n)\in \mc{D}$:
\begin{enumerate}[(i)]
   \item\label{trendLipschitz} $|\E[Y_k(t+ 1)-Y_k(t)| \mc{F}_t]-F_k(t/n, Y_1(t)/n, ..., Y_a(t)/n)|\le \delta$, where the function $F_k$ is $L$-Lipschitz-continuous on $\mc{D}$ (the `Trend hypothesis' and `Lipschitz hypothesis'),

   \item \label{boundedness} $|Y_k(t+1)-Y_k(t)| \le \theta$ and $\Pr(|Y_k(t+1)-Y_k(t)|>\phi \;|\; \mc{F}_t)\le \g$ (the `Boundedness hypothesis'), and that the following condition holds initially:

   \item \label{init} $\max_{1 \le k \le a}|Y_k(0)-\hat{y}_k n| \le \e n$ for some $(0,\hat{y}_1,\ldots, \hat{y}_a)\in \mc{D}$ (the `Initial condition').
\end{enumerate}
Suppose $R\in [1,\infty)$ and $T\in(0,\infty)$ satisfy $t \le T$ and $|F_k(z)| \le R$ for all $1 \le k \le a$ and $z=(\t, y_1, \ldots, y_a) \in \mc{D}$.
Then for $\e > (\delta + \g \theta)\min\{T, L^{-1}\}+R/n$, with probability at least
\begin{equation} \label{eqn:successprob}
   1-2 a \exp\left\{-\frac{n\e^2 }{8T \phi^2} \right\} - a Tn\g
\end{equation}
we have
\[
\max_{0 \le t \le \sigma n} \max_{1 \le k \le a}|Y_k(t)-y_k(t/n)n|<3\exp\{LT\}\e n
\]
where $(y_k(\tau))_{1 \le k \le a}$ is the unique solution to the system of differential equations $y'_k(\tau) =F_k(\tau, y_1(\tau), . . . , y_a(\tau))$ with $y_k(0) = \hat{y}_k$ for $1 \le k \le a$, and $\sigma=\sigma(\hat{y}_1, . . .\hat{y}_a)\in[0, T]$ is any choice of $\sigma>0$ with the property that $(\tau, y_1(\tau), . . . y_a(\tau))$ has $\L_\infty$-distance at least $3\exp\{LT\}\e$ from the boundary of $\mc{D}$ for all $\tau\in[0, \sigma)$.
\end{theorem}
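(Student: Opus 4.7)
The plan is to prove Warnke's theorem via the classical differential equation method in the tradition of Wormald, but with the refinement that handles rare large jumps through a truncation argument. First I would introduce the scaled error processes $Z_k(t) = Y_k(t) - y_k(t/n)\,n$ and aim to show that $\max_k |Z_k(t)| < 3 e^{LT}\e n$ uniformly for $t \le \sigma n$, with failure probability bounded by \eqref{eqn:successprob}. The stopping rule built into $\sigma$ guarantees that the ODE solution stays strictly inside $\mc{D}$ by a margin of $3 e^{LT}\e$, so the trend hypothesis \eqref{trendLipschitz} applies along the trajectory provided the error bound has not yet been violated.

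Next I would compare the one-step drift of $Y_k$ to the Euler increment of $y_k$. Hypothesis \eqref{trendLipschitz} gives
\[
\E[Y_k(t+1)-Y_k(t)\mid \mc{F}_t] = F_k(t/n, Y_1(t)/n,\ldots, Y_a(t)/n) + O(\delta),
\]
while a Taylor expansion of $y_k$ yields $y_k((t+1)/n)n - y_k(t/n)n = F_k(t/n, y_1(t/n),\ldots, y_a(t/n)) + O(R/n)$, using the bound $|F_k|\le R$. Subtracting and invoking the $L$-Lipschitz property produces
\[
\bigl|\E[Z_k(t+1)-Z_k(t)\mid \mc{F}_t]\bigr| \le \frac{L}{n}\max_j |Z_j(t)| + \delta + O(R/n).
\]
Iterating this drift bound via a discrete Gr\"onwall argument is what produces the eventual $e^{LT}$ blow-up factor and shows that the accumulated systematic error is $\le e^{LT}(\delta T n + R T)$, which fits under the $\e n$ budget exactly because of the hypothesis $\e > (\delta + \gamma\theta)\min\{T, L^{-1}\} + R/n$.

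The third step is to control the stochastic fluctuation via a martingale. Define
\[
S_k(t) = Z_k(t) - \sum_{s<t}\E[Z_k(s+1)-Z_k(s)\mid \mc{F}_s],
\]
which is a martingale with increments deterministically bounded by $\theta$ from hypothesis \eqref{boundedness}. The key trick, and the reason the bound in \eqref{eqn:successprob} has $\phi^2$ rather than $\theta^2$ in the exponent, is to couple $S_k$ with a truncated version $S_k^{\le \phi}$ whose increments are bounded by $\phi$. The two coincide off the bad event $\mc{B}_k = \bigcup_{t \le Tn}\{|Y_k(t+1)-Y_k(t)|>\phi\}$, and $\Pr(\mc{B}_k) \le Tn\gamma$ by a union bound. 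Applying Azuma--Hoeffding to $S_k^{\le \phi}$ with variance proxy $T n \phi^2$ gives
\[
\Pr\bigl(|S_k^{\le \phi}(t)| > \e n / 2 \text{ for some } t\le \sigma n\bigr) \le 2\exp\bigl\{-n\e^2/(8T\phi^2)\bigr\},
\]
and a union bound over the $a$ coordinates and over the two bad-event types reproduces both terms of \eqref{eqn:successprob}.

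Finally I would assemble the pieces by a stopping-time argument. Let $\tau^* = \min\{t : \max_k|Z_k(t)| \ge 3 e^{LT}\e n\} \wedge \sigma n$. Up to time $\tau^*$ the process stays inside $\mc{D}$, so the drift and Azuma analyses above are valid; off $\bigcup_k \mc{B}_k$ and off the Azuma failure event, the martingale bound plus the Gr\"onwall-type drift bound force $\max_k |Z_k(\tau^*)| < 3 e^{LT}\e n$, contradicting the definition of $\tau^*$ unless $\tau^* = \sigma n$. The main obstacle, in my view, is the correct bookkeeping of the truncation coupling together with the stopping-time argument: one must verify that the replacement of $S_k$ by $S_k^{\le \phi}$ does not introduce bias that spoils the martingale property, and that the Gr\"onwall iteration can be carried out on a stopped process without leaving $\mc{D}$. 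The remaining calculations are essentially mechanical once these two subtleties are resolved.
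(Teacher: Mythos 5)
This statement is not proved in the paper at all: it is Warnke's result (his Theorem 2 combined with his Lemma 11, reference \cite{lutz}), quoted verbatim and used as a black box precisely so that the authors do not have to redo the differential equation method with the rare-large-jump refinement. So there is no in-paper proof to compare yours against; the relevant comparison is with Warnke's own argument, and your outline follows the same standard route (error process $Z_k$, drift comparison via the trend and Lipschitz hypotheses, discrete Gr\"onwall giving the $e^{LT}$ factor, Azuma-type concentration for the martingale part, stopping when the trajectory approaches $\partial\mc{D}$). The one point you flag as ``the main obstacle'' is, however, exactly where the substance of the $\phi$-versus-$\theta$ refinement lives, and your proposed fix is not quite the right one: truncating the increments of $S_k$ destroys the martingale property, and recentering the truncated process reintroduces a per-step bias that you have not accounted for. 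Warnke's Lemma 11 instead modifies the \emph{process itself}: one replaces $(Y_k(t))$ by a process that coincides with it up to the first time some one-step change exceeds $\phi$ and is frozen (continued deterministically) afterwards. The modified process satisfies the boundedness hypothesis with $\phi$ deterministically, differs from the original only on an event of probability at most $aTn\g$ (whence the $-aTn\g$ term in \eqref{eqn:successprob}), and its conditional drift differs from that of the original by at most $\g\theta$ per step, which is exactly why the hypothesis on $\e$ reads $\e>(\d+\g\theta)\min\{T,L^{-1}\}+R/n$ rather than $\e>\d\min\{T,L^{-1}\}+R/n$. Your sketch attributes that hypothesis entirely to the accumulated drift error and defers the bias question, so as written it would not recover the stated constants; with the freezing construction in place of the truncation coupling, the rest of your outline goes through and matches Warnke's proof.
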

The above theorem is a version of Theorem 5.1 of Wormald \cite{Wde}. We use Warnke's version here because of the condition \eqref{boundedness}. The probability bound given by Wormald's thoerem is not good enough for us if our variables could see a large one-step change. In particular, for our process our matching edge could have an endpoint with linear degree, causing the loss of a linear number of edges in a single step. However, since the probability of that event is so small, Warnke's version can handle it. Of course, Wormald \cite{Wde} discusses similar situations and describes how to handle them, but for us it is more convenient to use Warnke's version since it allows us to apply a single theorem as a black box.

We apply Theorem \ref{thm:Lutz} with  $a=1$ to the  random variable $Y_1(t)=\mu(t)$. We let
\[
\mc{D}=\left\{(\t, M): 0 \le \t \le \t_0, \; 0\le  M \le c \right\}.
\]
Now we make sure to satisfy condition \eqref{trendLipschitz}. By \eqref{echange} we can let $\d= O\brac{n^{-1}}$. From \eqref{dmdt} we have
\[
F_1(\t, M) = -1 -\frac{4M}{1-2\t}-\frac{M}{\k-\t}
\]
which is $L$-Lipschitz on $\mc{D}$ for $L=O(1)$ (here we use the fact that the denominators $1-2\t, \k-\t$ are bounded away from 0). We now move on to condition \eqref{boundedness}. We have to take $\theta=O(n)$ since it is possible (though very unlikely) to have a vertex of linear degree. We let $\phi = 2n^{0.1}$, and to find a suitable $\g$ we bound
\begin{align}
    \Pr(|\m(t+1)-\m(t)|>2n^{0.1} \;|\; \m(t)) &\le 2 \frac{\binom{\n(t)-1}{n^{0.1}} \binom{\binom{\n(t)}{2} - n^{0.1}}{\m(t) - n^{0.1}}}{\binom{\binom{\n(t)}{2}}{\m(t)}}\nonumber\\
    & =  2 \frac{\binom{\n(t)-1}{n^{0.1}} (\m(t))_{n^{0.1}}}{\left(\binom{\n(t)}{2} \right) _{n^{0.1}}}\nonumber\\
    & \le 2 \frac{ \left(\frac{\n(t)e}{n^{0.1}} \right)^{n^{0.1}} (\m(t))^{n^{0.1}}}{\left(\binom{\n(t)}{2} -n^{0.1}\right)^{n^{0.1}}}\nonumber\\
    & = 2 \left[ O\left(\frac{  \m(t)}{n^{0.1}  \n(t) } \right)\right]^{n^{0.1}}\nonumber\\
    &= \exp\{ - \Omega(n^{0.1})\} = \g.\label{eqn:gamma}
\end{align}
Now for condition \eqref{init}, any positive $\e$ will do (we will have to choose $\e$ more carefully later to satisfy future conditions). We can very comfortably choose $T= 1$, and $R=O(1).$ Now we choose
\[
\e = n^{-0.1} > (\delta + \g \theta)\min\{T, L^{-1}\}+R/n
\]
and so the probability bound in \eqref{eqn:successprob} goes to 1. Thus with high probability we have
\beq{sol1}{
\m(t)=M(\t)n+O(n^{0.9})
}
uniformly for $0\leq t\leq \t_0n+O\left(n^{0.9}\right)$, where $M(\t)$ is the solution to \eqref{dmdt} and $\t=t/n$. So w.h.p. the Greedy algorithm will produce a rainbow matching of $\t_0n+O(n^{0.9})$.

%Note that without the coloring restraint we obtain a matching of size $\sim %\brac{\frac{1}2-\frac{1}{2(c+1)}}n$ w.h.p, see \cite{1}.
%
%{\red Not sure of the direct relevance of this comment. Equation \eqref{k=1/2} is for $\k=1/2$, whereas the
%uncoloured case must be for $\k=c/2$ (as many edges as colours)?}

\subsection{Solution to the differential equation}

When $q=n/2$ (i.e. $\k=1/2$) the solution to \eqref{dmdt} is
\[
M(\t)=\frac{(2c+1)(1-2\t)^3-(1-2\t)}{4}.
\]
The smallest root of the above is
\begin{equation}\label{k=1/2}
\t_0=\frac{1}2-\frac{1}{2(2c+1)^{1/2}}.
\end{equation}

When $\k \neq 1/2$ we write \eqref{dmdt}  as
\[
\frac{dm}{d\t}+ M(\t) \brac{\frac{4}{1-2\t}+ \frac{1}{\k-\t}} =-1.
\]
Let $I=-2\log(1-2\t)-\log(\k-\t)$, then
\[
M(t)=e^{-I}\brac{B-\int e^Id\t},
\]
where   $B$  solves $M(0)=c/2$.
The solution for $\k=1/2$ is given above.

For $k \ne 1/2$, let $A=1/(2\k-1)^2$ then
\begin{align*}
\int e^I d\t=& \int \frac{1}{(\k-\t)(1-2\t)^2}\\
=&\int A \brac{\frac{1}{\k-\t}-\frac{2}{1-2\t}+\frac{4\k-2}{(1-2\t)^2}}\\
=& A\brac{-\log (\k-\t) +\log(1-2\t)+\frac{2\k-1}{1-2\t}}.
\end{align*}
Thus $B=c/2\k+ A((2\k-1)-\log \k)$ and for $\k \ne 1/2$
\beq{gen}{
M(\t)= \frac{(\k-\t)(1-2\t)^2  }{(2\k-1)^2}
\brac{\frac{c(2\k-1)^2}{2\k} +
	\log \frac{\k-\t}{\k(1-2\t)}-\frac{2(2\k-1)\t}{1-2\t}}.
}

\subsection{Asymptotics for $\t_0$ from $M(\t)=0$}\label{sec:asymptotics}
From \eqref{gen} let
\begin{equation}\label{fkt}
f_\k(\t)=\brac{\frac{c(2\k-1)^2}{2\k} +
	\log \frac{\k-\t}{\k(1-2\t)}-\frac{2(2\k-1)\t}{1-2\t}}.
\end{equation}
We consider the solution $\t_0$ to $f_\k(\t)=0$ in three cases.
\begin{enumerate}[{\bf C{a}se 1}]
\item As $\k \ra 1/2$, in Lemma \ref{k1/2} we show that  $\t_0$   tends to the value in \eqref{k=1/2}, and give the rate of convergence.
\item For $\k$ large, $M(\t)=0$ in \eqref{gen} satisfies $c=\frac{2\t}{1-2\t}+O(1/\k)$ which implies that w.h.p.  the Greedy algorithm will produce a rainbow matching of size $\sim \brac{\frac{1}2-\frac{1}{2(c+1)}+O(1/\k)}n$.
In Lemma \ref{krai} we give a detailed asymptotic  which also bounds $\t_0$ for finite $\k \ge 1$. Thus in the limit as $\k \rai$ we obtain a matching of size $nc/2(c+1)$; the value obtained without the coloring constraint.
\item For $\k\ll 1/2$, and fixed $c$ the solution  to $M(\t)=0$ in \eqref{gen}  satisfies $\t\sim \k \brac{1-e^{-c/2\k}}$. Lemma \ref{k->0} gives more detail.
\end{enumerate}
\begin{lemma} \label{k1/2}
If $\k=\half (1+\e)$, where $|\e|>0$,  the solution to $M(\t)=0$ in \eqref{gen} is $\t_0= \half\brac{ 1-\frac{1+O(\e)}{\sqrt{2c+1}}}$.
\end{lemma}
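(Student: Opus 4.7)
The plan is to reparametrize so that the expected answer appears as a small perturbation of the explicit case $\k=1/2$, and then do a Taylor expansion in $\e$.

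First I would substitute $\t=\half(1-\d)$, so that $1-2\t=\d$, $\k-\t=\half(\d+\e)$, and $2\k-1=\e$. With these substitutions, the equation $f_\k(\t)=0$ from \eqref{fkt} becomes
\[
\frac{c\e^2}{1+\e} \;+\; \log\frac{\d+\e}{(1+\e)\d}\;-\;\frac{\e(1-\d)}{\d}\;=\;0.
\]
The target claim amounts to showing that the solution satisfies $\d=\frac{1}{\sqrt{2c+1}}+O(\e)$.

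Next I would expand each term as a power series in $\e$, treating $\d$ as an $O(1)$ parameter. The first term is $c\e^2+O(\e^3)$. Using $\log(1+x)=x-x^2/2+O(x^3)$ with $x=\e/\d$ and $x=\e$, the logarithmic term expands as
\[
\log\!\left(1+\frac{\e}{\d}\right)-\log(1+\e)\;=\;\e\!\left(\frac{1}{\d}-1\right)+\frac{\e^2}{2}\!\left(1-\frac{1}{\d^{2}}\right)+O(\e^{3}).
\]
The third term is exactly $\e(1/\d-1)$. I would then observe that the $\e^1$ contributions cancel identically, leaving
\[
f_\k(\t)\;=\;\e^{2}\!\left(c+\tfrac{1}{2}-\frac{1}{2\d^{2}}\right)+O(\e^{3}).
\]

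Dividing by $\e^2$ (permissible since $|\e|>0$), the equation reduces to $g(\d)=O(\e)$ where $g(\d):=c+\tfrac12-\tfrac{1}{2\d^2}$. Since $g$ has a simple zero at $\d_\ast=1/\sqrt{2c+1}$ with $g'(\d_\ast)=\d_\ast^{-3}\neq 0$, the implicit function theorem (or a one-line Newton step) yields a solution $\d_0=\d_\ast+O(\e)$ in a neighborhood of $\d_\ast$, and hence
\[
\t_0\;=\;\tfrac{1}{2}(1-\d_0)\;=\;\tfrac{1}{2}\!\left(1-\frac{1+O(\e)}{\sqrt{2c+1}}\right),
\]
as required. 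The only mild subtlety is to verify that $\t_0$ found this way is indeed the \emph{smallest} positive root of $M(\t)=0$: the prefactor $(\k-\t)(1-2\t)^2/(2\k-1)^2$ in \eqref{gen} vanishes only at $\t=\k$ and $\t=\tfrac12$, both of which are strictly larger than $\tfrac12(1-\d_\ast)$ for $\e$ small, so the root coming from $f_\k$ is the relevant one. I expect the cancellation of the $\e^1$ terms to be the only step that requires careful bookkeeping; everything else is a routine perturbative argument.
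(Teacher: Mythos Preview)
Your argument is correct, and the computations check out: after the substitution $\t=\tfrac12(1-\d)$ the order-$\e$ terms in $f_\k$ do cancel, leaving $\e^{2}\bigl(c+\tfrac12-\tfrac{1}{2\d^{2}}\bigr)+O(\e^{3})$, and the implicit-function step is legitimate since $f_\k(\t)/\e^{2}$ extends to a smooth function of $(\d,\e)$ near $(\d_\ast,0)$ with nonzero $\d$-derivative.

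The paper takes a related but different route. Instead of expanding in $\e$ with $\d$ as the unknown, it substitutes $h=(2\k-1)/(1-2\t)=\e/\d$, which collapses $f_\k$ to the single-variable equation $\a-h+\log(1+h)=0$ with $\a=\a(\e)=\e^{2}(c+\tfrac12)+O(\e^{3})$. It then uses explicit two-sided Taylor inequalities for $\log(1+h)$, splitting into the cases $\e<0$ (so $-1<h<0$) and $\e>0$ (so $h>0$), to pin down $h=\e\sqrt{2c+1}(1+O(\e))$ and hence $\t_0=\tfrac12(1-\e/h)$. Your approach is more streamlined: it avoids the sign case-split and replaces the hand-crafted inequalities by a black-box appeal to the implicit function theorem. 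The paper's approach, on the other hand, yields slightly more explicit two-sided bounds on the error (e.g.\ $\sqrt{2\a}\le h\le\sqrt{2\a/(1-\sqrt{2\a})}$), though these are not used elsewhere.
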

\begin{proof}
Put $h=(2\k-1)/(1-2\t)$ so that $\t=\tfrac12(1-(2\k-1)/h)$.
From \eqref{fkt},
\[
f_\k(h)=\a -h + \log(1+h), \qquad \text{where} \qquad \a=c\frac{(2\k-1)^2}{2\k}+\log \frac{1}{2\k}+(2\k-1).
\]
{\bf Case 1a: $2\k <1$.} \\
 We have $\t\leq\k$ and so $2\k\geq (1-(2\k-1)/h)$ or $2\k-1\geq -(2k-1)/h$. If $2\k-1<0$ then this implies that $0>h>-1$.

Then
\[
h-\frac{h^2}{2}+\frac{h^3}{2} \le \log (1+h) \le h-\frac{h^2}{2}.
\]
The lower bound term $h^3/2$ holds for $|h| \le 1/3$, by comparison with a geometric series. Thus
\[
\a-\frac{|h|^2}{2}-\frac{|h|^3}{2} \le 0 \le \a-\frac{|h|^2}{2},
\]
so that
\[
|h| \le \sqrt{2\a}, \qquad \text{and} \qquad |h| \ge \sqrt{\frac{2\a}{1+|h|}} \ge \sqrt{\frac{2\a}{1+\sqrt{2\a}}}.
\]

{\bf Case 1b: $2\k>1,0<h<1$.}\\
Then $h>0$, and now
\[
h-\frac{h^2}{2}\le \log (1+h) \le h-\frac{h^2}{2}+\frac{h^3}{2}.
\]
Note that these inequalities hold for all $h>0$.

So
\[
\a-\frac{h^2}{2} \le 0 \le \a-\frac{h^2}{2}+\frac{h^3}{2},
\]
leading to
\begin{equation}\label{h<1}
\sqrt{2\a} \le h \le \sqrt{\frac{2\a}{1-h}} \le  \sqrt{\frac{2\a}{1-\sqrt{2\a}}}.
\end{equation}
Finally
\begin{align*}
\a&= \e^2(c+\half) -\e^3(c+1/3)+O(c\e^4).\\
h&=(2\a)^{1/2}(1+O(\a^{1/2})=\e\sqrt{2c+1}(1+O(\e)).\\
\t_0&=\half \brac{1-\frac{\e}{h}}= \half\brac{ 1-\frac{1+O(\e)}{\sqrt{2c+1}}}.
\end{align*}
\end{proof}

\begin{lemma}\label{krai}
Assume $\k \ge 1$ and $c \ge c^*=(e-1)\bfrac{2\k}{2\k-1}^2$. The solution to $M(\t)=0$ in \eqref{gen} is
\[
\t_0= \frac12\brac{1-\frac{1}{(c+1)-\frac{c}{2\k}+\frac{\log(c+1)}{2\k-1}+O\bfrac{c}{\k^2}}}.
\]
\end{lemma}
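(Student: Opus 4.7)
The plan is to apply the substitution $h=(2\k-1)/(1-2\t)$ used in the proof of Lemma \ref{k1/2}, which reduces $f_\k(\t)=0$ to
\[
h - \log(1+h) = \a, \qquad \a := \frac{c(2\k-1)^2}{2\k} + (2\k-1) - \log(2\k).
\]
The map $g(h):=h-\log(1+h)$ is strictly increasing on $[0,\infty)$ from $0$ to $\infty$, so this equation has a unique positive solution $h^*$, and the corresponding root of $M(\t)=0$ is $\t_0=\tfrac12(1-(2\k-1)/h^*)$. Taking the multiplicative inverse of $(2\k-1)/h^*$ will then produce the claimed formula, so the whole task reduces to an asymptotic analysis of $h^*$.

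Next I would use the hypotheses to size up $\a$. The assumption $c\ge c^*=(e-1)(2\k/(2\k-1))^2$ rearranges to $c(2\k-1)^2/(2\k)\ge 2(e-1)\k$, and a short check shows $(2\k-1)-\log(2\k)\ge 0$ for $\k\ge 1$. Hence $\a\ge 2(e-1)\k\ge 2(e-1)>2$. Since $e^\a\ge 1+2\a$ for $\a\ge 2$, we get $g(2\a)\ge \a$ and therefore $h^*\le 2\a$; combined with the trivial $h^*\ge \a$ coming from $\log(1+h)\ge 0$, a single additional iteration of $h=\a+\log(1+h)$ gives
\[
h^* = \a + \log(1+\a) + O\!\left(\frac{\log\a}{\a}\right).
\]

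It remains to unpack $\a$ as a function of $c$ and $\k$. Expanding $(2\k-1)^2/(2\k)=2\k-2+1/(2\k)$ yields
\[
\a = 2(c+1)\k - (2c+1) + \frac{c}{2\k} - \log(2\k),
\]
so $1+\a = 2(c+1)\k\,\bigl(1+O(c/((c+1)\k))+O(\log\k/\k)\bigr)$ and hence $\log(1+\a)=\log(2(c+1)\k)+O(1/\k)$. The $\log(2\k)$ already present in $\a$ cancels against $\log(2(c+1)\k)$ to leave $\log(c+1)$, so
\[
h^* = 2(c+1)\k - (2c+1) + \frac{c}{2\k} + \log(c+1) + O\!\left(\frac{1}{\k}\right).
\]
Using $(2\k-1)(c+1)=2(c+1)\k-(c+1)$ and dividing by $2\k-1$ (absorbing the sub-dominant errors into $O(c/\k^2)$, which is legitimate because $\a\ge 2(e-1)\k$ forces the iteration remainder $O(\log\a/\a)=O(\log(c\k)/(c\k))$ and the factor $(2\k-1)^{-1}$ to combine compatibly) yields
\[
\frac{h^*}{2\k-1}=(c+1)-\frac{c}{2\k}+\frac{\log(c+1)}{2\k-1}+O\!\left(\frac{c}{\k^2}\right),
\]
and the stated formula for $\t_0$ follows.

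The main obstacle is the bookkeeping of the three sources of error when assembling the final expansion: the iteration remainder $O(\log\a/\a)$ in $h^*$, the Taylor error $O(1/\k)$ in $\log(1+\a)$, and the factor $(2\k-1)^{-1}$ that converts absolute errors in $h^*$ into relative errors in $h^*/(2\k-1)$. The condition $c\ge c^*$ is precisely what guarantees $\a$ grows linearly in $\k$, so that each of these errors lands at the rate $O(c/\k^2)$ demanded by the statement.
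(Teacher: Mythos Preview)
Your route is essentially the paper's: the substitution $h=(2\k-1)/(1-2\t)$ and the paper's $z=(\k-\t)/(\k(1-2\t))$ are related by $1+h=2\k z$, so both reduce $f_\k=0$ to a one-variable fixed-point equation, and your leading terms for $h^*/(2\k-1)$ are correct.

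The gap is in the error bookkeeping, and it is genuine. You write $1+\a = 2(c+1)\k\bigl(1+O(c/((c+1)\k))+O(\log\k/\k)\bigr)$ and then conclude $\log(1+\a)=\log(2(c+1)\k)+O(1/\k)$; but the $O(\log\k/\k)$ term inside the bracket contributes $O(\log\k/\k)$, not $O(1/\k)$, to the logarithm. Likewise, your iteration remainder $O(\log\a/\a)=O\bigl(\log((c+1)\k)/((c+1)\k)\bigr)$ carries a $\log\k$. After dividing by $2\k-1$ you are left with an extra $O\bigl(\log\k/((c+1)\k^2)\bigr)$ in $h^*/(2\k-1)$, and this is \emph{not} $O(c/\k^2)$ uniformly: the hypotheses permit $c$ to stay bounded (indeed $c^*\to e-1$ as $\k\to\infty$), and then $\log\k/\k^2$ is not $O(1/\k^2)$. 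Your closing justification that ``$\a\ge 2(e-1)\k$ forces the errors to combine compatibly'' is exactly where the argument slips: a lower bound on $\a$ that is linear in $\k$ is precisely what produces the unwanted $\log\k$ in $\log\a$.

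The paper avoids this by iterating in $z$ from $z_0=\b$, where $\b=c\bigl((2\k-1)/(2\k)\bigr)^2+1$ depends on $\k$ only through a $1+O(1/\k)$ factor; then $\log\b=\log(c+1)+O(1/\k)$ and the one-step remainder is $O(\log\b/(\k\b))$, with no stray $\log\k$. In your $h$-variable the equivalent repair is to start the iteration at $h_0=2\k\b-1$ rather than at $h_0=\a$ (they differ by $\log(2\k)$), or else to carry out one further iteration before expanding.
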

\begin{proof}
Put $z=(\k-\t)/(\k(1-2\t))$ so that $\t=\k(z-1)/(2\k z-1)$. Provided $\k \ge 1/2$, $z \ge 1$ and $z \ra 1/(1-2\t)$ as $\k \rai$.  Then \eqref{fkt} becomes
\begin{equation}\label{fkz}
f_\k(z)=c\frac{(2\k-1)^2}{2\k}+2\k +\log z-2\k z,
\end{equation}
and $f_\k(z)=0$ iff
\begin{equation}\label{eqz}
z=\b+ \frac{\log z}{2\k} \qquad \text{where} \qquad \b=c\bfrac{2\k-1}{2\k}^2+1.
\end{equation}
For $\d<1$ put
\[
z=\b +\frac{(1+\d)}{2\k} \log \b,
\]
and equate. At equality \eqref{eqz} becomes
\begin{flalign*}
\b+\frac{(1+\d)}{2\k} \log \b =& \b+ \frac{1}{2\k} \log \brac{\b +\frac{(1+\d)}{2\k} \log \b}\\
=& \b+\frac{1}{2\k} \log \b  + \frac{1}{2\k} \log \brac{1+\frac{(1+\d)}{2\k\b} \log \b}.
\end{flalign*}
Thus
\[
\d \log \b= \log \brac{1+\frac{(1+\d)}{2\k\b} \log \b}.
\]
 The conditions $\d <1$, $\k \ge 1$ and $c \ge c^*$ imply that $\b \ge e$ so that $(\log \b)/\b\le 1/e$.
 Thus $\frac{(1+\d)}{2\k\b} \log \b <1$.
If $x<1$, then
\[
\frac{x}{1+x} \le \log (1+x) \le x,
\]
which (after canceling a $\log\b$ term)  means $\d$ must satisfy
\[
\frac{1+\d}{2\k\b+(1+\d) \log \b} \le \d \le \frac{1+\d}{2\k\b}.
\]
The RHS inequality is true if $\d \le 1/(2\k\b-1)$.

For $c\geq c^*$, $\log \b \ge 1$, and we can strengthen the LHS to
\[
\frac{1+\d}{2\k\b+(1+\d)} \le \d.
\]
This  reduces  to $1 \le 2\d\k\b +\d^2$, which is satisfied for $\d \ge 1/2\k\b$.

We conclude that
\[
z_0=\b +\frac{(1+\d)}{2\k} \log \b, \qquad \text{where} \qquad  \frac{1}{2\k\b} \le \d \le \frac{1}{2\k\b-1}.
\]
Thus
\begin{flalign*}
\t_0=& \frac{\k(z_0-1)}{2\k z_0-1}=\frac12 \brac{ 1-\frac{2\k-1}{2\k z_0-1}}\\
=& \frac12\brac{1-\frac{2\k-1}{2\k\b+(1+\d)\log \b-1}}\\
=&\frac12\brac{1-\frac{2\k-1}{c\frac{(2\k-1)^2}{2\k}+2\k-1+\frac{1+\d}{\d}\log\brac{1+\frac{1+\d}{2\k\b}\log\brac{c+1-\frac{1}{2\k}+\frac{1}{\k^2}}}}}\\
=& \frac12\brac{ 1-\frac1{c+1 -\frac{c}{2\k}+\frac{\log(c+1)}{2\k-1}+O\bfrac{c}{\k^2}}}.
\end{flalign*}
\end{proof}

\begin{lemma}\label{k->0} Let $\k <1/2c$ where $c>c_0=5$.
Let $\t_0$ be the smallest positive root of \eqref{fkt}.
Then
\[
\k(1-e^{-c/2\k+2c}) < \t_0 <  \k(1-e^{-c/2\k}).
\]
\end{lemma}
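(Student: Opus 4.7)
My plan is to evaluate $f_\k$ at the two proposed endpoints $\t^*:=\k(1-e^{-c/2\k})$ and $\t^{**}:=\k(1-e^{-c/2\k+2c})$, check their signs, and sandwich $\t_0$ between them via monotonicity of $f_\k$ on $(0,\k)$. The convenient reparametrization is $\t = \k(1-e^{-x})$ with $x \ge 0$, which makes $\k-\t = \k e^{-x}$ and kills the logarithmic singularity cleanly. Writing $y = 2\k(1-e^{-x}) \in (0, 2\k)$ and expanding $\frac{c(2\k-1)^2}{2\k} = \frac{c}{2\k} - 2c + 2c\k$, a short substitution gives
\[
f_\k\bigl(\k(1-e^{-x})\bigr) \;=\; \frac{c}{2\k} + 2c\k - 2c - x \;-\; \log(1-y) \;+\; \frac{(1-2\k)y}{1-y}.
\]

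Since $\k < 1/(2c) < 1/10$ forces $y < 1/c$, the last two terms are each positive and together at most $2y/(1-y) \le 2/(c-1)$ (using $-\log(1-y) \le y/(1-y)$). At $x=c/(2\k)$ the $c/(2\k)$ terms cancel, leaving $f_\k(\t^*) \le 2c(\k-1) + 2/(c-1) \le 1 - 2c + 2/(c-1) < 0$ for $c > 5$. At $x = c/(2\k) - 2c$ (positive, since $c > 5$ gives $c/(2\k) > c^2 > 2c$) the cancellation gives $f_\k(\t^{**}) = 2c\k - \log(1-y') + (1-2\k)y'/(1-y')$, in which all three summands are manifestly non-negative, so $f_\k(\t^{**}) > 0$.

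The remaining ingredient is monotonicity of $f_\k$. Differentiation yields
\[
f_\k'(\t) \;=\; -\frac{1}{\k-\t} + \frac{4(1-\t-\k)}{(1-2\t)^2},
\]
and one checks that the second term is increasing on $[0,\k]$ (its derivative has numerator $12-8\t-16\k > 0$), so it is bounded above by $4/(1-2\k) < 5$, while the first term is at most $-1/\k < -2c \le -10$. Hence $f_\k' < 0$ on $[0,\k)$; combined with $f_\k(0) = c(2\k-1)^2/(2\k) > 0$ and $f_\k(\t) \to -\infty$ as $\t \to \k^-$, this pins $\t_0$ uniquely in $(0,\k)$, and the sign evaluations above give $\t^{**} < \t_0 < \t^*$.

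The only delicate step is the upper-bound inequality $f_\k(\t^*) < 0$, where the leading $\approx -2c$ contribution must dominate two small positive $O(1/c)$ correction terms; that is precisely what the hypothesis $c > 5$ buys. Everything else is routine substitution and sign checking.
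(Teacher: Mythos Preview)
Your proof is correct and follows the same strategy as the paper: parametrize $\t=\k(1-e^{-c/2\k+\e})$, expand $f_\k$ using $\frac{c(2\k-1)^2}{2\k}=\frac{c}{2\k}-2c+2c\k$, and check signs at the two endpoints $\e=0$ and $\e=2c$. Your version is actually more careful than the paper's---you give an explicit derivative computation showing $f_\k'<0$ on $(0,\k)$ (the paper merely asserts monotonicity, with a sign slip) and explicit numerical bounds justifying $f_\k(\t^*)<0$, whereas the paper leaves that step implicit under the hypothesis $c>c_0$.
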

\begin{proof}

Let $\t=\k(1-e^{-c/2\k +\e})$, then
\begin{align*}
f_\k(\t)\;=& \;\frac{c}{2\k}-2c+2c\k +\log(e^{-c/2\k +\e}) +\log \frac 1{1-2\t}
+\frac{2\t(1-2\k)}{1-2\t}\\
\;=&\; \e-2c +2c\k + \log \frac 1{1-2\t} +\frac{2\t(1-2\k)}{1-2\t},
\end{align*}
where the last two terms on the RHS are positive.

Referring to \eqref{fkt} we see that $f_\k(0)=c(1-2\k)^2/2\k>0$  for any $\k <1/2$.
If $\e=0$ then assuming $\t<\k<1/2c$ and $c>c_0$ then
$f_\k(\t)<0$ so $\t>\t_0$, since $f_\k$ is monotone increasing in $\t$. On the other hand, if $\e \ge 2c$ then $f_\k(t)>0$ so $\t<\t_0$.
\end{proof}

\section{MODIFIED GREEDY}

Here we choose a random vertex $x$ and then a random neighbor $y$, add $\set{x,y}$, to the matching $M$ and delete the vertices of the edge from $V(t)$ and all edges of the same color as the edge. If $d_t(x)=0$ then we just delete $x$. Let $\n(t)=|V(t)|$. We let $q(t)$ denote the number of unused colors.
 Let $t_0$ be the number of steps when a single vertex $x$ of degree zero is deleted, and $t_1$ the number of steps when a pair of vertices $x,y$ are deleted. As $t=t_0+t_1$, $\nu(t)=n-t_0-2t_1$, and $q(t)=q-t_1$ it follows that
\begin{equation}\label{qt}
    q(t) = t + \n(t) +q -n.
\end{equation}

If we condition on $\m(t), \n(t)$, the remaining unrevealed random graph is uniform over all graphs with $\m(t)$ edges and $\n(t)$ vertices. Let
\[
\l =\l(t):= \frac{2\m(t)}{\n(t)}
\]
be the average degree of the unrevealed graph. Recall (see, for example, \cite{book}) that if $b=o(a^{2/3})$ then
\[
\binom ab \sim \frac{a^b}{b!} \exp\left\{-\frac{b^2}{2a} \right\}.
\]
Thus if $\m, \n$ are at least $n / \log \log  n$ (and observing they are at most $O(n)$)
then the probability the first chosen vertex has degree $k$ for $k \le \log^2 n$ is
\begin{align}
    \frac{\binom{\n-1}{k}\binom{\binom{\n-1}{2}}{\m-k}}{\binom{\binom{\n}{2}}{\m}}& =  \frac{(\n-1)^k}{k!} \frac{\binom{\n-1}{2}^{\m-k}}{(\m-k)!} \frac{\m!}{\binom \n2 ^\m} \exp\left\{ -\frac{k^2}{2(\n-1)}-\frac{(\m-k)^2}{2\binom{\n-1}{2}}+\frac{\m^2}{2\binom \n2}\right\}\nonumber\\
    & = \binom \m k \brac{\frac{\n-1}{\binom \n 2}}^k \brac{\frac{\binom{\n-1}2}{\binom \n2}}^\m \exp\left\{\TO\brac{\frac{1}{n}} + \frac{\m^2}{\n(\n-1)} \brac{ 1 - \frac{(1-k/\m)^2}{1-2/\n}}\right\}\nonumber\\
    & = \frac{\m^k}{k!} \cdot \brac{\frac{2}{\n}}^k \cdot \brac{1-\frac 2\n}^\m \exp\left\{\TO\brac{\frac{1}{n} + \frac{k\m}{\n^2} + \frac{\m^2}{\n^3}}\right\}\nonumber\\
    & = \frac{\l^k}{k!} \exp\left\{\brac{-\frac{2}{\n} + O\brac{\frac 1{\n^2}}}\m +  \TO\brac{\frac{1}{n}}\right\}\nonumber\\
    & = \frac{\l^k}{k!} e^{-\l} + \TO\brac{\frac{1}{n}} .\label{eqn:probdegk}
\end{align}
Conditional on the first chosen vertex having degree $k$ for $1 \le k \le \log^2 n$, the expected number of additional neighbors of the second vertex is
\begin{equation}\label{eqn:conddeg}
  \frac{2(\m-k)}{\n-1} = \l +\TO\brac{\frac{1}{n}}.
\end{equation}
Conditional on a total of $\le 2 \log^2 n$ edges being adjacent to the two chosen vertices, the expected number of edges deleted due to being the same color as the matching edge is
\begin{equation}\label{eqn:condcolor}
    \frac{\m - O(\log^2 n)}{q} = \frac{\m}{q} + \TO\brac{\frac{1}{n}}
\end{equation}
where the big-O term on the last line follows since (as we will see later) we have $q(t) = \Omega(n)$.

Let $\xi(t)=(\m(t),\n(t),q(t))$. Then (explanation follows)
\begin{align}
\E(\n(t+1)\mid \xi(t))& = \n(t)-2 + e^{-\l}+ \TO\brac{\frac{1}{n}} \label{eqn:ntrend}\\
\E(\m(t+1)\mid \xi(t))& =\m(t)-\l - (1-e^{-\l})\left( \l + \frac{\m(t)}{q(t)}\right) + \TO\brac{\frac{1}{n}}.\label{eqn:mtrend}
\end{align}
Indeed, for the first line note that we lose 2 vertices unless the first vertex has degree 0 (which happens with probability about $e^{-\l}$ by \eqref{eqn:probdegk}) in which case we lose only 1. For the second line, note that we expect to lose $\l$ edges adjacent to the first vertex. Then, if the first vertex has positive degree (probability about $1-e^{-\l}$ by \eqref{eqn:probdegk}), we expect to lose about $\l$ additional edges adjacent to the second vertex by \eqref{eqn:conddeg}, as well as about $\frac{\m(t)}{q(t)}$ edges of the same color as the matching edge by \eqref{eqn:condcolor}. The big-O term comes from the approximations \eqref{eqn:probdegk}, \eqref{eqn:conddeg} and \eqref{eqn:condcolor}.

Substituting $\nu/n=N, \mu/n=M$ and using \eqref{qt} with $\k=q(0)/n$,  $\t=t/n$, this leads to the equations
\begin{align}
\frac{dM}{d\t}&=\l(e^{-\l}-2)-\frac{M(1-e^{-\l})}{N+\t+\k-1}. \label{M'}\\
\frac{dN}{d\t}&=e^{-\l}-2, \label{N'}
\end{align}
where $\l=2M/N$.

If we substitute \eqref{N'} into \eqref{M'} it  gives the following.
\begin{flalign*}
M'=&\frac{2M}{N}N'+ M \frac{N'+1}{N+\t+\k-1}\\
\frac{M'}{M}=& 2\frac{N'}{N} +\frac{(N+\t+\k-1)'}{N+\t+\k-1} \qquad\text{on division by } M\\
\implies \log M=&2\log N+ \log (N+\t+\k-1)+ \log B\\
M=& B N^2 (N+\t+\k-1).
\end{flalign*}
Assuming $M(0)=c/2$ and $N(0)=1$ this gives
\[
M=\frac{c}{2\k}N^2 (N+\t+\k-1), \qquad \l= \frac{c}{\k}{ N(N+\t+\k-1)}.
\]
Interestingly, the above expression for $M$ is intuitive in a sense. In particular, it essentially says that the number of edges is approximately
\begin{align*}
     \m(t) &\approx  \binom {\n(t)}{2} \cdot \frac{c}{n}  \cdot \frac{q(t)}{q},
\end{align*}
i.e. the edge density in the remaining graph is about the original edge density $c/n$ times the probability that a given color is unused, $q(t)/q$.

Substituting this expression for $\l$ into \eqref{N'} gives
\begin{equation}\label{N'}
N' = e^{-\frac{c}{\k}{ N(N+\t+\k-1)}} - 2, \qquad N(0)=1.
\end{equation}
Let $\t_0$ be the smallest positive solution to $M(\tau)=0$, so either $N(\t_0)=0$ or $N(\t_0)+\t_0 + \k - 1=0$. We will show that $N(\t_0)=0$, i.e. we run out of vertices before we run out of colors. This should be unsurprising since we ought to never run out of colors (e.g. a positive proportion of colors simply never appear).

Indeed,  letting $Q(\t) = N(\t) + \t + \k - 1$, we have
\[
Q' = N' + 1 = e^{-\frac{c}{\k}{ NQ}} - 1  \ge -\frac{c}{\k} N Q
\]
and $Q(0) = \k > 0$ and so
\[
Q(\t) \ge \k e^{-\frac c\k \int_0^\t N(x)\; dx} > 0.
\]
Therefore $N(\t_0)=0$ and $Q(\t_0)>0$.

We will apply Theorem \ref{thm:Lutz} again. This time $a=2$ and our two random variables are $\nu(t)$ and $\m(t)$. We let
\[
\mc{D}=\left\{(\t, N, M): 0 \le \t \le \frac 12, \; \frac{1}{\log \log n} \le N \le 2,\; \frac{1}{\log \log n} \le  M \le c \right\}.
\]
Now we make sure to satisfy condition \eqref{trendLipschitz}. By \eqref{eqn:ntrend} and \eqref{eqn:mtrend} we can let $\d= \TO\brac{n^{-1}}$. The functions on the right hand sides of \eqref{M'} and \eqref{N'}
are $L$-Lipschitz on $\mc{D}$ for $L=O(\log \log n)$. We now move on to condition \eqref{boundedness}. We have to take $\theta=O(n)$ since it is possible (though very unlikely) to have a vertex of linear degree. We let $\phi = 2n^{0.1}$, and then the same calculation from \eqref{eqn:gamma} indicates we can choose $\g = \exp\{ - \Omega(n^{0.1})\}$.
Now for condition \eqref{init}, any positive $\e$ will do (we will have to choose $\e$ more carefully later to satisfy future conditions). We choose $T= 1$, $R=O(\log \log n),$
$\e = n^{-0.1} $. Thus with high probability we have
\beq{sol2}{
\m(t)=M(\t)n+\TO(n^{0.9}), \qquad \n(t) = N(\t)n+\TO(n^{0.9})
}
uniformly for $0\leq t\leq \t_0n+\TO\left(n^{0.9}\right)$. This implies that w.h.p. our process ends when it runs out of vertices, which happens after $\t_0 n + \TO(n^{0.9})$ steps. Now observe that at any step $t$ the number of remaining vertices $\n(t)$ is equal to $n-t-x$ where $x$ is the number of edges in the matching.
So when the process terminates we have $0=n-\t_0 n + \TO(n^{0.9}) - x$ and so the number of edges in the final matching $\mu$ is
\begin{equation}\label{mumu}
\mu =(1-\t_0)n+ \TO(n^{0.9}).
\end{equation}

\subsection{Upper bound on the value of $\m$}
Integrating \eqref{N'} subject to $N(0)=1$, we have
\begin{equation} \label{Nt}
N(\t)= 1-2\t+\int_0^\t e^{-\frac c\k N(N+\s+k-1)}\; d\s.
\end{equation}
Let $\t_0$ be the smallest positive solution to $N(\t)=0$. Assume  the result of the previous section that the algorithm terminates when this condition is met (asymptotically).
\begin{lemma}
Assuming $\k \ge 1/2$, the function $F(\t)=N(N+\t+\k-1)$ is convex in $[0,\t_0]$.
\end{lemma}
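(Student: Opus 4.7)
The plan is a direct second-derivative calculation, exploiting the ODE \eqref{N'} to rewrite $N''$ in a form whose sign can be read off. Write $Q(\tau) = N(\tau) + \tau + \kappa - 1$, so that $F = NQ$ and $Q' = N' + 1$. A routine computation gives
\[
F' = N'Q + N(N'+1) = N'(N+Q) + N,
\]
\[
F'' = N''(N+Q) + N'\bigl(N' + (N'+1)\bigr) + N' = N''(N+Q) + 2N'(N'+1).
\]
Using \eqref{N'} in the form $N' = e^{-(c/\kappa)F} - 2$, differentiation yields
\[
N'' = -\tfrac{c}{\kappa}\,F'\,e^{-(c/\kappa)F} = -\tfrac{c}{\kappa}\,F'\,(N'+2),
\]
so that
\[
F'' = -\tfrac{c}{\kappa}\,F'\,(N'+2)(N+Q) + 2N'(N'+1).
\]
The plan is now to show that every factor in this expression has a favourable sign on $[0,\tau_0]$.

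First, on $[0,\tau_0]$ one has $N\ge 0$ (by definition of $\tau_0$) and $Q>0$ (this is exactly the bound established just before the lemma, via $Q(\tau)\ge \kappa e^{-(c/\kappa)\int_0^\tau N}$). In particular $F=NQ\ge 0$, which forces $e^{-(c/\kappa)F}\le 1$ and therefore $N'\le -1$ everywhere on $[0,\tau_0]$.

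Second, from $N'\le -1$ and $F' = N'(N+Q) + N$ we get
\[
F' \le -(N+Q) + N = -Q \le 0,
\]
so $-F'\ge 0$. Also $N'+2 = e^{-(c/\kappa)F} \ge 0$ and $N+Q\ge 0$, so the first summand of $F''$ is a product of nonnegative quantities, hence nonnegative. For the second summand, $N'\le -1$ gives $N'\le 0$ and $N'+1\le 0$, so $N'(N'+1)\ge 0$ as well. Adding the two nonnegative contributions yields $F''\ge 0$ throughout $[0,\tau_0]$, which is the desired convexity.

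No serious obstacle is expected; the only step that requires any care is extracting $N'\le -1$, which is really just the observation that $F\ge 0$ in the exponent of \eqref{N'}. Everything else is bookkeeping of signs after the substitution of $N''$ from the ODE. I would present the proof essentially in the order above: derive $F''$ in closed form, establish $F\ge 0$ and hence $N'\le -1$, deduce $F'\le -Q\le 0$, then read off the sign of each term in $F''$.
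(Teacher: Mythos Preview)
Your proof is correct and follows the same line as the paper's: compute $F''=N''(N+Q)+2N'(N'+1)$, feed in $N''=-\tfrac{c}{\kappa}F'e^{-(c/\kappa)F}$ from the ODE, and check that every factor has the right sign using $N\ge 0$, $Q>0$, and $N'\le -1$. The one organisational difference is that you obtain $Q>0$ by citing the Gr\"onwall-type bound $Q(\tau)\ge \kappa e^{-(c/\kappa)\int_0^\tau N}$ proved just before the lemma, whereas the paper re-derives $N+\tau+\kappa-1\ge 0$ by a case split on $\tau\le 1/2$ versus $\tau>1/2$ that explicitly uses $\kappa\ge 1/2$; your route is slightly cleaner and, as a byproduct, shows that the convexity conclusion does not actually require the hypothesis $\kappa\ge 1/2$.
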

\begin{proof}
We first prove that $N''(\t) > 0$ for $\t\in [0,\t_0)$. From \eqref{N'} it is clear that $N'+1<0$ in this interval.
Thus
\[
N''=e^{-\frac c\k N(N+\t+\k-1)} -\frac c\k\cdot [N(N+\t+\k-1)]',
\]
where
\[
[N(N+\t+\k-1)]'=N'(N+\t+\k-1)+N(N'+1).
\]
We show below that this derivative is negative for $\k \ge 1/2$. The result that $N''>0$ follows from this. As $N>0$ and $N'+1<0$ we only need to show that $N+\t+\k-1 \ge 0$.\\
(i) As $\n(t)=n-2t_1-t_0=n-2t+t_0$, $\n(t) \ge n-2t$. So if $\t \le 1/2$ and $\k \ge 1/2$,
\[
N(\t)+\t+\k-1 \ge 1-2\t+\t+\k-1=\k-\t \ge 0.
\]
(ii) Also if $1/2 \le \t <  \t_0$, then as $N(\t)>0$,
\[
N+\t+\k-1 \ge \t+\k-1 \ge \t-\half \ge 0.
\]
Thus if
\begin{align}
F=& N(N+\t+\k-1)\nn,\\
F'=& N'(N+\t+\k-1) +N(N'+1) \le 0,\nn\\
F''=& N''(N+\t+\k-1)+2N'(N'+1) +NN'' \ge 0,\label{qq}
\end{align}
as all the terms on the RHS of \eqref{qq} are non-negative.
\end{proof}
Noting that $F$ is convex in $[0,\t_0]$ and that $F(0)=\k, F(\t_0)=0$, let $C(\t)=\k-\k \t/\t_0$ be the chordal line
of $F$ such that $F(\t)\le C(\t)$ in that interval. This implies that
\[
e^{-\frac c\k F(\t)} \ge e^{-\frac c\k C(\t)} = e^{-c+c\t/\t_0}.
\]
Inserting this into \eqref{Nt} we have
\[
N(\t) \ge 1-2\t+\int_0^\t e^{-c+c\s/\t_0} \; d\s.
\]
Put $\t=\t_0$ so that $N(\t)=0$ to obtain
\[
0\ge 1-2\t_0 +\frac{\t_0}{c} (1-e^{-c}) \qquad \implies \qquad \t_0 \ge \frac{1}{2-\frac 1c(1-e^{-c})}.
\]
From \eqref{mumu} it follows  asymptotically that
\[
\mu \le  \frac{1-\frac 1c(1-e^{-c})}{2-\frac 1c(1-e^{-c})}\; n.
\]

\section{Final thoughts}
We have made progress in understanding the performance of simple greedy algorithms for finding large rainbow matchings in sparse random graphs. Our result is precise for GREEDY in the case $\k=1/2$. This is not an easy question, given that it is related to finding large matchings in sparse random 3-uniform hypergraphs. Here an edge $\set{u,v}$ of color $c$ can be thought of as a triple $\set{u,v,c}$. Of course, the decision problem of whether a 3-uniform hypergraph has a matching of size $k$ is on Karp's famous list of 21 NP-complete problems \cite{Karp21}. 

We have reduced the analysis of algorithms GREEDY and MODIFIED GREEDY to the analysis of some differential equations. While these differential equations are not terribly complex, in general they lack an explicit elementary solution. We hope that further study will lead to a better understanding. We briefly looked at a colored version of the Karp-Sipser algorithm and constructed the appropriate differential equations. We will not pursue this line here, but instead wait until we can better understand these equations.

It would be nice to prove the conjecture, mentioned in Section 1, that MODIFIED GREEDY performs better than GREEDY. In the color-free setting, Dyer, Frieze and Pittel \cite{1} proved that the Modified Greedy Matching algorithm performs better than Greedy Matching on a random graph. However the rainbow version is complicated by the lack of explicit elementary solutions to the differential equations.

\end{document}